\newcounter{minutes}\setcounter{minutes}{\time}
\newcounter{hours}\setcounter{hours}{\time}
\newtheorem{theorem}[equation]{Theorem}
\newtheorem{proposition}[equation]{Proposition}
\newtheorem{corollary}[equation]{Corollary}
\theoremstyle{definition}%
\newtheorem{definition}[equation]{Definition}%
\newtheorem{example}[equation]{Example}
\numberwithin{equation}{section}
\DeclareMathOperator{\ve}{\varepsilon}
 \DeclareMathOperator{\card}{card}
\newcommand{\NN}{\mathbb N}
\begin{document}

\title[On the uniqueness of continuation of a metric]{\bf On the uniqueness of continuation of a partially defined metric
}

\author{\bf Evgeniy Petrov}

\address{Institute of Applied Mathematics and Mechanics of the NAS of Ukraine, Dobrovolskogo str. 1, Slovyansk 84100, Ukraine}
\email{eugeniy.petrov@gmail.com}

\begin{abstract}
The problem of continuation of a partially defined metric can be efficiently studied using graph theory.
Let $G=G(V,E)$ be an undirected graph with the set of vertices $V$ and the set of edges $E$. \mbox{A necessary} and sufficient condition under which the weight $w\colon E\to\mathbb R^+$ on the graph $G$ has a unique continuation to a metric $d\colon V\times V\to\mathbb R^+$ is found.
 \\\\
{\bf Key words:} weighted graph, metric space,
shortest-path metric, continuations of metrics.\\\\
{\bf 2010 Mathematics Subject Classification:} 05C12, 54E35
\end{abstract}
\maketitle

\section{Introduction}
\footnote{This is the submitted version. The published version is available at Theory and Applications of Graphs: Vol. 10: Iss. 1, Article 1, 2023. \\ \url{https://doi.org/10.20429/tag.2022.100101}}
The problem of continuation of a weight $w\colon E\to\mathbb R^+$ defined on the set of edges $E(G)$ of the graph $G=(V,E)$ to a pseudometric was considered in ~\cite{DMV}. In particular, it was found a set of necessary and sufficient conditions under which the weight $w$ can be extended to a pseudometric $d\colon V\times V\to\mathbb R^+$, see Theorem~\ref{t1}. The set $\mathfrak M_w$ of all such extensions can be partially ordered, see Definition~\ref{d1}.
It was shown that the shortest-path pseudometric $d_w$ is the greatest element of $\mathfrak M_w$ and that the least continuation exist if and only if $G$ is complete $k$-partite. The analogous problem of continuation of a weight to an ultrametric was considered in~\cite{DP}. Moreover, in~\cite{DP} the question of uniqueness of such continuation was studied. The aim of this paper is to fill this gap in~\cite{DMV}, i.e., to find a uniqueness criterion for continuation of a weight to a metric (pseudometric).

Note that a problem of continuation of a weight to a metric can be reformulated as a problem of continuation of a partially defined metric. In such setting, some cases of this problem were considered earlier. For example, the free amalgamation property for finite metric spaces states that there always exists a metric on the union $X\cup Y$ of finite metric spaces $(X,d_1)$, $(Y,d_2)$ which agrees with $d_1$ on $X$ and $d_2$ on $Y$, if $d_1=d_2$ for elements of $Z$ where $Z=X\cap Y$, see~\cite{B,AMI}. A review of results related to extensions of continuous and uniformly continuous pseudometrics can be found in~\cite{H}.

Recall the basic definitions from the graph theory, see, for example, \cite{BM}. A \textit{graph} is a pair $(V,E)$ consisting of a nonempty set $V$ and a (probably empty) set $E$  elements of which are unordered pairs of different points from $V$. For a graph $G=(V,E)$, the sets $V=V(G)$ and $E=E(G)$ are called \textit{the set of vertices} and \textit{the set of edges}, respectively.  A \emph{path} in a graph $G$ is a subgraph $P$ of $G$ whose vertices can be numbered so that
$$
V(P)=\{x_0,x_1,...,x_k\}, \quad E(P) =\{\{x_0,x_1\},...,\{x_{k-1},x_k\}\},
$$
where all $x_i$ are distinct. A graph $G$ is \emph{connected} if any two distinct vertices of $G$ can be joined by a path. A finite graph $C$ is a \emph{cycle} if $|V(C)| \geq 3$ and there exists an enumeration $(v_1, \ldots, v_n)$ of its vertices such that
$$
(\{v_i, v_j\} \in E(C)) \Leftrightarrow (|i-j|=1 \text{ or } |i-j|=n-1).
$$
The edge $e=\{u,v\}$ is said to {\it join} $u$ and $v$, and the vertices $u$ and $v$ are called {\it adjacent} in $G$. The graph $G$ is {\it empty} if no two vertices are adjacent, i.e., if $E(G)=\varnothing$.

A {\it weighted graph} $(G,w)$ is a graph $G=(V,E)$ together with a weight $w\colon E\to\mathbb R^+$ where $\mathbb{R}^+=[0,\infty)$. If $(G,w)$ is a weighted graph, then for each subgraph $F$ of the graph $G$  we define the weight of $F$ as
\begin{equation}\label{e1}
w(F)=\sum_{e\in E(F)}w(e).
\end{equation}

Recall also that a {\it pseudometric} $d$ on the set $X$ is a
function $d:X\times X\to\mathbb R^+$ such that $d(x,x)=0,\
d(x,y)=d(y,x)$ and $d(x,y)\leq d(x,z)+d(z,y)$ for all $x,y,z\in X$.
The pseudometric $d$ on $X$ is a {\it metric} if, in addition,
$((d(x,y)=0)\Rightarrow(x=y))$
for all $x,y\in X$.

Let $(G,w)$ be a weighted graph.
If there exists a pseudometric $d$ on the set $V(G)$ such that the equality
\begin{equation}\label{e28}
w(\{u,v\})=d(u,v)
\end{equation}
holds for all edges $\{u,v\}\in E(G)$, then we say that the weight $w$ is {\it pseudometrizable}.

Let $(G,w)$ be a connected  weighted graph and let $u,v \in V(G)$. Denote by $\mathcal
P_{u,v}=\mathcal P_{u,v}(G)$ the set of all paths joining  $u$ and
$v$ in $G$. Write
\begin{equation}\label{e3}
d_w(u,v)=\inf\{w(P):P\in\mathcal P_{u,v}\},
\end{equation}
where $w(P)$ is the weight of the path $P$. The function $d_w$ is known as {\it weighted shortest-path pseudometric}.

We need the following for the proof of the main result of this paper.

\begin{theorem}[\!\!\cite{DMV}]\label{t1}
Let $(G,w)$ be a weighted graph. The following statements are
equivalent.
\begin{itemize}

\item[\rm(i)] The weight $w$ is pseudometrizable.

\item[\rm(ii)] The equality
\begin{equation}\label{e4}
w(\{u,v\})=d_w(u,v)
\end{equation}
holds for all $\{u,v\}\in E(G)$.

\item[\rm(iii)] For every cycle $C\subseteq G$ we have the
inequality
\begin{equation}\label{eq5}
2\max_{e\in E(C)}w(e)\leq w(C).
\end{equation}
\end{itemize}
\end{theorem}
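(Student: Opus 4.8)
The plan is to prove the cyclic chain of implications $(\mathrm{i})\Rightarrow(\mathrm{ii})\Rightarrow(\mathrm{iii})\Rightarrow(\mathrm{i})$, resting throughout on the preliminary observation that the shortest-path function $d_w$ defined in~\eqref{e3} is itself always a pseudometric on $V(G)$ (for connected $G$; for a disconnected graph one reads $d_w$ and the argument off each component separately). Indeed, $d_w$ is symmetric and vanishes on the diagonal, and it obeys the triangle inequality because concatenating a path from $u$ to $z$ with a path from $z$ to $v$ produces a $u$--$v$ walk, from which---by deleting repeated vertices and using $w\ge 0$---one extracts a genuine path of no larger weight. Together with the trivial estimate $d_w(u,v)\le w(\{u,v\})$ furnished by the single-edge path, this reduces the entire statement to comparing $d_w$ with $w$ on the edges of $G$.

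For $(\mathrm{i})\Rightarrow(\mathrm{ii})$ I would fix a pseudometric $d$ realizing $w$ and apply its triangle inequality along an arbitrary path $P=(x_0,\dots,x_k)\in\mathcal P_{u,v}$:
\[
d(u,v)\le\sum_{i=0}^{k-1}d(x_i,x_{i+1})=\sum_{i=0}^{k-1}w(\{x_i,x_{i+1}\})=w(P),
\]
so that $d(u,v)\le d_w(u,v)$; for an edge $\{u,v\}$ this combines with $d_w(u,v)\le w(\{u,v\})=d(u,v)$ to force the equality~\eqref{e4}. The reverse implication $(\mathrm{ii})\Rightarrow(\mathrm{i})$ is then immediate: once $d_w$ is known to be a pseudometric, condition $(\mathrm{ii})$ says exactly that $d_w$ realizes $w$, so $w$ is pseudometrizable.

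The heart of the proof is the equivalence $(\mathrm{ii})\Leftrightarrow(\mathrm{iii})$, which I would handle by the device of closing a path into a cycle. For $(\mathrm{ii})\Rightarrow(\mathrm{iii})$, in a cycle $C$ I pick a maximal-weight edge $e^*=\{v_j,v_{j+1}\}$; deleting it leaves a path $P$ joining $v_j$ and $v_{j+1}$ with $w(P)=w(C)-w(e^*)$, and $(\mathrm{ii})$ gives $\max_{e\in E(C)}w(e)=w(e^*)=d_w(v_j,v_{j+1})\le w(P)$, which rearranges to $2\max_{e\in E(C)}w(e)\le w(C)$. For $(\mathrm{iii})\Rightarrow(\mathrm{ii})$ it suffices to check that every $P\in\mathcal P_{u,v}$ with $\{u,v\}\in E(G)$ satisfies $w(P)\ge w(\{u,v\})$; when $P$ has length $\ge 2$ it cannot already traverse the edge $\{u,v\}$, so adjoining that edge turns $P$ into a cycle $C$ with $w(C)=w(P)+w(\{u,v\})$, and since $\{u,v\}\in E(C)$ we have $\max_{e\in E(C)}w(e)\ge w(\{u,v\})$; then $(\mathrm{iii})$ yields $2w(\{u,v\})\le w(C)=w(P)+w(\{u,v\})$, i.e. $w(\{u,v\})\le w(P)$. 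Passing to the infimum recovers $w(\{u,v\})=d_w(u,v)$.

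I expect the main difficulty to be bookkeeping rather than conceptual. One must verify carefully that deleting a maximal edge from a cycle genuinely leaves a path on distinct vertices, that a path of length $\ge 2$ closes up into an object that is a cycle in the precise sense of the definition above (so that $|V(C)|\ge3$ and $\{u,v\}$ is not already an edge of $P$), and that the walk-to-path reduction behind the triangle inequality never increases weight. The only other routine point is to spell out the disconnected case, where $d_w$ and pseudometrizability are handled component by component.
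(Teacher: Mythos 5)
The paper itself gives no proof of Theorem~\ref{t1}: it is imported verbatim from \cite{DMV}, so there is no in-paper argument to compare against. Your proof is correct and is essentially the standard argument (and the one in \cite{DMV}): the walk-to-path reduction shows $d_w$ is a pseudometric, the triangle inequality along a path gives $(\mathrm{i})\Rightarrow(\mathrm{ii})$, and the two cycle manipulations---deleting a maximal-weight edge from a cycle, and closing a path of length $\ge 2$ into a cycle (legitimate, since a path on distinct vertices of length $\ge 2$ cannot contain the edge $\{u,v\}$)---settle $(\mathrm{ii})\Leftrightarrow(\mathrm{iii})$. The only point you leave implicit, the disconnected case, does merit a sentence: $d_w$ is infinite across components, so one should note that pseudometrics realizing $w$ on the separate components can be amalgamated into a global one, e.g.\ by gluing chosen basepoints via $d(a,b)=d_A(a,a_0)+d_B(b_0,b)$, after which the componentwise reduction is fully justified.
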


\begin{definition}\label{d1}
Let $G$ be a graph and let $w$ be a pseudometrizable weight on $E(G)$.
Denote by $\mathfrak M_w$ is the set of all pseudometrics $\rho$
on $V(G)$ satisfying the equality
\begin{equation}\label{e14}
\rho(u,v)=w(\{u,v\})
\end{equation}
for each $\{u,v\}\in E(G)$. Let us introduce a partial order
$\leqslant$ on the set $\mathfrak M_w$ as:
\begin{equation*}\label{e14*}
(\rho_1\leqslant\rho_2)\quad \text{if and only if}\quad
(\rho_1(u,v)\leqslant\rho_2(u,v)) \text{ for all $u,v\in V(G)$}.
\end{equation*}
\end{definition}

\begin{proposition}[\!\!\cite{DMV}]\label{p7}
Let $(G,w)$ be a nonempty weighted graph with a pseudometrizable weight
$w$. If $G$ is connected then the shortest-path
pseudometric $d_w$ belongs to $\mathfrak{M}_w$ and this pseudometric
is the greatest element of the poset $(\mathfrak{M}_w,\leqslant)$,
i.e., the inequality
\begin{equation}\label{e15}
\rho\leqslant d_w
\end{equation}
holds for each $\rho\in\mathfrak{M}_w$. Conversely, if the poset
$(\mathfrak{M}_w,\leqslant)$ contains the greatest element, then $G$
is connected.
\end{proposition}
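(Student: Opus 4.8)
The plan is to prove the statement in three steps: first that $d_w$ is a well-defined pseudometric belonging to $\mathfrak{M}_w$, then that it dominates every member of $\mathfrak{M}_w$, and finally the converse implication. For the first step I would use connectedness of $G$ to guarantee $\mathcal P_{u,v}\neq\varnothing$ for every pair $u,v$, so that the infimum in \eqref{e3} is taken over a nonempty set and $d_w(u,v)\in\mathbb R^+$ is well defined. Symmetry $d_w(u,v)=d_w(v,u)$ is immediate since reversing the enumeration of a path turns a $u$--$v$ path into a $v$--$u$ path of the same weight, and $d_w(u,u)=0$ follows from the trivial one-vertex path. For the triangle inequality $d_w(u,v)\le d_w(u,z)+d_w(z,v)$, given $\ve>0$ I would choose near-optimal paths $P_1\in\mathcal P_{u,z}$ and $P_2\in\mathcal P_{z,v}$; their union is a walk from $u$ to $v$ containing some path $Q\in\mathcal P_{u,v}$, and since $w\ge 0$ passing from the walk to $Q$ only removes edges, so $w(Q)\le w(P_1)+w(P_2)$. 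Letting $\ve\to 0$ yields the inequality. Finally, the edge equality \eqref{e14}, $d_w(\{u,v\})=w(\{u,v\})$, is exactly the equivalence (i)$\Leftrightarrow$(ii) of Theorem~\ref{t1}, which applies because $w$ is assumed pseudometrizable; hence $d_w\in\mathfrak M_w$.

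For the second step, let $\rho\in\mathfrak M_w$ be arbitrary and fix $u,v\in V(G)$. For any path $P\in\mathcal P_{u,v}$ with consecutive vertices $u=x_0,x_1,\dots,x_k=v$, repeated application of the triangle inequality for $\rho$ together with \eqref{e14} gives
\[
\rho(u,v)\le\sum_{i=1}^{k}\rho(x_{i-1},x_i)=\sum_{i=1}^{k}w(\{x_{i-1},x_i\})=w(P).
\]
Taking the infimum over $P\in\mathcal P_{u,v}$ yields $\rho(u,v)\le d_w(u,v)$, i.e.\ $\rho\leqslant d_w$, so $d_w$ is the greatest element.

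For the converse I would argue by contraposition: assuming $G$ is disconnected, I exhibit, for each $\rho\in\mathfrak M_w$, a strictly larger member of $\mathfrak M_w$, so that no greatest element can exist. Fix vertices $u,v$ in distinct connected components and a constant $c>0$, and set $\rho'=\rho+\delta$, where $\delta(a,b)=0$ if $a$ and $b$ lie in the same component and $\delta(a,b)=c$ otherwise. The main point requiring care is that $\rho'$ remains a pseudometric; this I would handle cleanly by observing that $\delta$ is $c$ times the discrete pseudometric on the set of components pulled back to $V(G)$ (if $[a]\neq[b]$ then for any $z$ at least one of $[a]\neq[z]$, $[z]\neq[b]$ holds), hence a pseudometric, and a sum of pseudometrics is a pseudometric. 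Since every edge of $G$ lies within a single component, $\delta$ vanishes on edges and $\rho'$ still satisfies \eqref{e14}, so $\rho'\in\mathfrak M_w$. As $\rho'(u,v)=\rho(u,v)+c>\rho(u,v)$, we have $\rho'\not\leqslant\rho$, so $\rho$ is not greatest; since $\rho$ was arbitrary, $(\mathfrak M_w,\leqslant)$ has no greatest element, which is the desired contrapositive.

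The forward parts are routine consequences of nonnegativity of $w$ and Theorem~\ref{t1}; I expect the genuine obstacle to be the construction in the converse, namely producing a modification of an arbitrary $\rho$ that strictly increases a cross-component distance while preserving the pseudometric axioms and the edge constraints. Recognizing the perturbation as $\rho$ plus a pullback of the discrete pseudometric on components is the key idea that makes the verification immediate rather than a case analysis.
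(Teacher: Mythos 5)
The paper states Proposition~\ref{p7} without proof, citing~\cite{DMV}, so there is no internal argument to compare against; judged on its own, your proof is correct and follows the standard route of the cited source: connectedness makes the infimum in~\eqref{e3} finite and $d_w$ a pseudometric (the walk-contains-a-path argument for the triangle inequality is sound since $w\geq 0$), the equivalence (i)$\Leftrightarrow$(ii) of Theorem~\ref{t1} gives the edge equality so $d_w\in\mathfrak M_w$, and the chain of triangle inequalities along any path gives $\rho\leqslant d_w$. Your converse via contraposition is also correct, and recognizing the perturbation as $\rho$ plus $c$ times the pullback of the discrete pseudometric on connected components (which vanishes on edges, so $\rho'\in\mathfrak M_w$, while $\rho'(u,v)>\rho(u,v)$ across components) is exactly the clean way to show no greatest element can exist when $G$ is disconnected.
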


\section{Uniqueness of continuation}

Clearly, if a weighted graph $(G,w)$ is not connected, then the continuation of the weight $w$ is not unique. Thus, it has sense to consider only connected graphs $G$. For the convergent sequence $(a_n)$ of reals we write $a_n \rightarrow a-0$ if $a_n \rightarrow a$ and $a_n\leqslant a$ for all $n\in \NN$. If, for a weighted graph $(G,w)$, the shortest-path pseudometric $d_w$ is a metric, then we say that $w$ is \emph{metrizable}.

\begin{theorem}\label{p1}
Let $(G,w)$ be a connected weighted graph with the metrizable weight $w$. 
The set $\mathfrak M_w$ consists only of one element $\rho$ if and only if for every two non-adjacent vertices $u, v\in V(G)$ there exists a sequence $(P_n)$ of paths $P_n\in \mathcal P_{u,v}$ (possibly stationary) such that
\begin{equation}\label{e2}
2\max\limits_{e\in P_n}w(e)-w(P_n)\rightarrow d_w(u,v)-0, \  \ \text{ as } \  n \rightarrow \infty.
\end{equation}
Moreover, if condition~(\ref{e2}) holds and $w$ is metrizable weight then $\rho(u,v)=d_w(u,v)$.
\end{theorem}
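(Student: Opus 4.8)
The plan is to pin down, for each pair of non-adjacent vertices $u,v$, the exact range of values $\rho(u,v)$ taken as $\rho$ runs over $\mathfrak M_w$, and then to observe that $\mathfrak M_w$ is a singleton if and only if this range is a single point for every such pair (on edges the value is already fixed by~(\ref{e14})). By Proposition~\ref{p7} the metric $d_w$ lies in $\mathfrak M_w$ and dominates every element, so the continuation is unique precisely when $\rho(u,v)=d_w(u,v)$ is \emph{forced} for all non-adjacent $u,v$. Hence the unique element, if it exists, is necessarily $d_w$, which is also the content of the ``Moreover'' clause.

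First I would prove the lower bound. Fix $\rho\in\mathfrak M_w$, a path $P\in\mathcal P_{u,v}$ with vertices $u=x_0,\dots,x_k=v$, and a heaviest edge $e_m=\{x_m,x_{m+1}\}$ of $P$. Writing $w(e_m)=\rho(x_m,x_{m+1})$ and applying the triangle inequality first through $u$ and $v$ and then along the two sub-paths of $P$ gives $w(e_m)\le\bigl(w(P)-w(e_m)\bigr)+\rho(u,v)$, that is
\begin{equation*}
\rho(u,v)\ge 2\max_{e\in P}w(e)-w(P).
\end{equation*}
As $\rho\le d_w$ by~(\ref{e15}), putting $L(u,v)=\sup_{P\in\mathcal P_{u,v}}\bigl(2\max_{e\in P}w(e)-w(P)\bigr)$ yields $L(u,v)\le\rho(u,v)\le d_w(u,v)$ for every $\rho\in\mathfrak M_w$, and in particular $L(u,v)\le d_w(u,v)$. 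Consequently condition~(\ref{e2}) is exactly the equality $L(u,v)=d_w(u,v)$, the convergence being automatically from below since each term is at most $L(u,v)$.

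The heart of the argument is the converse: every $t$ with $\max(0,L(u,v))\le t<d_w(u,v)$ is realized by some continuation. I would adjoin to $G$ the edge $\{u,v\}$ of weight $t$, obtaining $(G',w')$, and test pseudometrizability of $w'$ via Theorem~\ref{t1}(iii). Cycles avoiding the new edge are cycles of $G$ and cause no trouble; a cycle through $\{u,v\}$ is that edge together with some $P\in\mathcal P_{u,v}$, and a case split on whether the heaviest edge of the cycle is $\{u,v\}$ or lies in $P$ reduces~(\ref{eq5}) to the two inequalities $t\le w(P)$ and $t\ge 2\max_{e\in P}w(e)-w(P)$, both guaranteed by $t<d_w(u,v)\le w(P)$ and by $t\ge L(u,v)$. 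Thus $w'$ is pseudometrizable, so by Theorem~\ref{t1}(ii) its shortest-path pseudometric $d_{w'}$ satisfies $d_{w'}(u,v)=t$ while still extending $w$, i.e.\ $d_{w'}\in\mathfrak M_w$ with $d_{w'}(u,v)=t$.

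Finally I would assemble the equivalence. If~(\ref{e2}) holds for every non-adjacent pair, then $L(u,v)=d_w(u,v)$ together with the bound of the second paragraph forces $\rho(u,v)=d_w(u,v)$ for all $\rho\in\mathfrak M_w$, whence $\mathfrak M_w=\{d_w\}$ and $\rho=d_w$. Conversely, if~(\ref{e2}) fails for some pair $u_0,v_0$, then $L(u_0,v_0)<d_w(u_0,v_0)$; since $w$ is metrizable, $d_w(u_0,v_0)>0$, so $\max(0,L(u_0,v_0))<d_w(u_0,v_0)$, and the construction produces a continuation $\rho$ with $\rho(u_0,v_0)<d_w(u_0,v_0)$, hence $\rho\ne d_w$ and $|\mathfrak M_w|\ge 2$. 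The step I expect to be most delicate is precisely the verification in the third paragraph that adjoining the edge of weight $t$ keeps the weight pseudometrizable without altering the prescribed values on the original edges; this is where the exact form of the cycle condition and the metrizability hypothesis (which makes a strictly smaller value available because $d_w(u_0,v_0)>0$) are genuinely used.
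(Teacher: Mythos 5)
Your proposal is correct, and it reaches the conclusion by a partly different route than the paper. The converse half coincides with the paper's: both adjoin the edge $\{u,v\}$ with a weight strictly below $d_w(u,v)$ but at least $\sup_P q(P)$ (where $q(P)=2\max_{e\in P}w(e)-w(P)$), verify Theorem~\ref{t1}(iii) on the new cycles by the same case split on whether the heaviest edge is $\{u,v\}$ or lies in $P$, and take $d_{w'}$ as the second continuation. Where you genuinely diverge is the forward direction. The paper argues by contradiction: assuming some $\rho'\in\mathfrak M_w$ has $\rho'(u,v)=b<d_w(u,v)$, it builds a cycle from a path $P_{n_0}$ furnished by~(\ref{e2}) plus the edge $\{u,v\}$ weighted $b$, and shows this cycle violates~(\ref{eq5}). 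You instead prove directly, by one application of the triangle inequality along the path detoured through $\{u,v\}$, that $\rho(u,v)\ge q(P)$ for \emph{every} $\rho\in\mathfrak M_w$ and every $P\in\mathcal P_{u,v}$; combined with $\rho\le d_w$ from~(\ref{e15}) this sandwiches $\rho(u,v)$ between $L(u,v)=\sup_P q(P)$ and $d_w(u,v)$, so~(\ref{e2}) (equivalently, the paper's reformulation~(\ref{e10})) immediately forces $\rho=d_w$. This buys two simplifications: the necessity direction needs no appeal to Theorem~\ref{t1} at all, and the paper's separate contradiction argument ruling out $q(P)>d_w(u,v)$ (which is where the paper invokes metrizability of $d_w$ against condition (iii)) becomes automatic, since the lower bound applied to $\rho=d_w\in\mathfrak M_w$ (Proposition~\ref{p7}) already gives $q(P)\le d_w(u,v)$, and likewise makes the one-sided convergence in~(\ref{e2}) automatic. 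As a bonus your argument yields slightly more than the theorem asks: the set of achievable values $\rho(u,v)$ contains the whole interval $\bigl[\max(0,L(u,v)),\,d_w(u,v)\bigr]$, which makes transparent exactly when uniqueness fails. Your use of $\max(0,L(u,v))$ to keep the adjoined weight nonnegative, with metrizability guaranteeing $d_w(u,v)>0$ so the interval is nonempty, correctly isolates the one place the metrizability hypothesis is needed in the converse, just as in the paper.
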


\begin{proof}
Let condition~(\ref{e2}) hold and let $w$ be a metrizable weight.
Definition of metrizability of $w$ means that $d_w(u,v)>0$ for all $u, v$, $u\neq v$.
Suppose there exists $\rho'\in \mathfrak M_w$ such that  $\rho'\neq d_w$. Then by Proposition~\ref{p7} the inequality
\begin{equation}\label{e211}
 \rho'(u,v)=b<d_w(u,v)
\end{equation}
holds for some non-adjacent $u,v \in V(G)$.

According to~(\ref{e2}) for every $\ve>0$ there exists $n_0\in \NN$ such that
\begin{equation}\label{e21}
2\max\limits_{e\in P_{n_0}}w(e)-w(P_{n_0}) >d_w(u,v) - \ve.
\end{equation}

 Let $\ve$ be  such that
\begin{equation}\label{e22}
d_w(u,v)>b+\ve.
\end{equation}
It follows from~(\ref{e21}) and~(\ref{e22}) that
\begin{equation*}
2\max\limits_{e\in P_{n_0}}w(e)-w(P_{n_0})>b.
\end{equation*}
Transform this inequality to the following
\begin{equation}\label{e24}
\max\limits_{e\in P_{n_0}}w(e)>w(P_{n_0})-\max\limits_{e\in P_{n_0}}w(e)+b.
\end{equation}
Consider the weighted  cycle $(C_{n_0}, w')$ such that $E(C_{n_0})=E(P_{n_0})\cup\{u,v\}$,  $w'(e)=w(e)$, $e\in P_{n_0}$ and $w'\{u,v\}=b$.
The value in the right side of inequality~(\ref{e24}) is the sum of all weights of edges of the cycle $C_{n_0}$ except the maximal weight.
Hence, $\max\limits_{e\in P_{n_0}}w(e)$ is the maximal weight of the cycle $C_{n_0}$.
Inequality~(\ref{e24}) implies
$$
2\max\limits_{e\in C{n_0}}w'(e)>w'(C_{n_0})
$$ and according to condition (iii) of Theorem~\ref{t1} this contradicts to the fact that $\rho'$ is a pseudometric.

Conversely, let the set $\mathfrak M_w$ consist only of one element. By Proposition~\ref{p7} this element is $d_w$. Suppose that for some non-adjacent $u,v \in V(G)$ and for all sequences $(P_n)$ of paths $P_n\in \mathcal P_{u,v}$ condition~(\ref{e2}) does not hold.
Let us show that there exists another continuation $\tilde{\rho} \in \mathfrak M_w$ of the weight $w$ such that $\tilde{\rho}(u,v)<d_w(u,v)$.

By supposition, there exists $\ve>0$ such that for all  paths $P \in \mathcal P_{u,v}$ the relation
$$
q(P)\in (-\infty, d_w(u,v)-\ve)\cup(d_w(u,v), +\infty)
$$
holds, where
\begin{equation}\label{e35}
q(P)=2\max\limits_{e\in P}w(e)-w(P).
\end{equation}

Suppose that for some $P$ the relation $q(P)\in (d_w(u,v), +\infty)$ holds. Consequently,~(\ref{e35}) implies
\begin{equation}\label{e36}
2\max\limits_{e\in P}w(e)>d_w(u,v)+w(P).
\end{equation}
Consider the weighted cycle $(C, w')$ defined as $E(C)=E(P)\cup\{u,v\}$, $w'(e)=w(e)$, $e\in E(P)$, $w'(\{u,v\})=d_w(u,v)$. Inequality~(\ref{e36}) implies
$$
2\max\limits_{e\in C}w'(e)>w'(C).
$$
Hence, according to condition (iii) of Theorem~\ref{t1} the function $d_w$ is not a metric, which is false by the supposition of the theorem.
Consequently, the relation $q(P)\in (d_w(u,v),+\infty)$ is impossible

Suppose now that for every $P$ we have $q(P)\in (-\infty,d_w(u,v)-\ve)$. Which means
\begin{equation*}\label{e41}
-\infty<2\max\limits_{e\in P}w(e)-w(P)<d_w(u,v)-\ve.
\end{equation*}
Hence, we have
\begin{equation}\label{e5}
2\max\limits_{e\in P}w(e)<w(P)+d_w(u,v)-\ve.
\end{equation}

Let $r$ be any real number from the interval $(d_w(u,v)-\ve, d_w(u,v))$ and let a weighted cycle $(C,\tilde{w})$ be such that $E(C)=E(P)\cup\{u,v\}$, $\tilde{w}(e)=w(e)$ for all $e\in E(P)$ and $\tilde{w}(\{u,v\})=r$.

Consider first the case where $\max\limits_{e\in P}w(e)>r$. Hence, $\max\limits_{e\in P}w(e)$ is a maximal weight in the cycle $C$.  Inequa\-li\-ty~(\ref{e5}) implies
$$
2\max\limits_{e\in C}\tilde{w}(e)= 2\max\limits_{e\in P}w(e)<w(P)+r=\tilde{w}(C).
$$
By condition (iii) of Theorem~\ref{t1} this means that the cycle $(C, \tilde{w})$ is metrizable.

Under the supposition  $\max\limits_{e\in P}w(e)\leqslant r$ we see that $\{u,v\}$ is an edge of maximal weight in the cycle $C$.
Hence,
$$
2\tilde{w}(\{u,v\})=2r<r+d_w(u,v)\leqslant r+w(P)=\tilde{w}(C).
$$
And the cycle $C$ is also metrizable.

Thus, the cycle $(C,\tilde{w})$ is metrizable for every $P\in \mathcal{P}_{u,v}$.

Consider a weighted graph $(G',w')$ such that $V(G')=V(G)$, $E(G')=E(G)$, $w'(e)=w(e)$ for every $e\in E(G)$ and $w'(\{u,v\})=r$. By condition (iii) of Theorem~\ref{t1} the weight $w'$ is metrizable since all the cycles in $G'$ are metrizable. The shortest-path pseudometric $d_{w'}$ considered on the graph $G'$ is one of the continuations of $w'$ which is a metric. It is clear that $d_{w'}$ is also a continuation of the weight $w$. Hence, there exist at least two different continuations $d_{w'}$ and $d_w$ of the weight $w$, since $r=d_{w'}(\{u,v\})<d_w(\{u,v\})$.
\end{proof}

Note that condition~(\ref{e2}) of Theorem~\ref{p1} can be reformulated as follows:
for every two non-adjacent vertices $u,v \in V(G)$ the relation
\begin{equation}\label{e10}
\sup\limits_{P\in \mathfrak P_{u,v}} \{2\max\limits_{e\in P}w(e)-w(P)\}= d_w(u,v)
\end{equation}
holds.
\begin{corollary}
Let $(G,w)$ be a connected weighted graph with a pseudometrizable weight $w$.
The weight $w$ admits the unique continuation to a pseudometric which is not a metric if and only if the two following conditions hold simultaneously:
 \begin{itemize}
   \item [(i)] There exist $u,v\in V(G)$, $u\neq v$, such that $d_w(u,v)=0$;
   \item [(ii)] For every non-adjacent $u,v\in V(G)$ such that $d_w(u,v)\neq 0$ condition~(\ref{e10}) holds.
 \end{itemize}
Moreover, if $\rho$ is the unique continuation of $w$, then $\rho(u,v)=d_w(u,v)$.
\end{corollary}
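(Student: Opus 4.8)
The plan is to split the assertion into two logically independent halves and reduce each to the machinery already developed. First observe that if $w$ admits a unique continuation $\rho$, then by Proposition~\ref{p7} this $\rho$ must be the greatest element $d_w$ of $\mathfrak M_w$; this already yields the ``Moreover'' clause $\rho=d_w$. Consequently the statement ``$w$ admits a unique continuation which is not a metric'' is equivalent to the conjunction of (a) $\mathfrak M_w$ is a singleton, and (b) its unique element $d_w$ is a pseudometric but not a metric. I would establish (b)$\Leftrightarrow$(i) and (a)$\Leftrightarrow$(ii) separately.

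For (b)$\Leftrightarrow$(i): since $d_w$ is the greatest element of $\mathfrak M_w$ and every $\rho\in\mathfrak M_w$ satisfies $0\leqslant\rho\leqslant d_w$, the pseudometric $d_w$ fails to be a metric exactly when $d_w(u,v)=0$ for some $u\neq v$, which is precisely condition~(i). When (i) fails, $d_w$ is a metric, so the unique continuation would be a metric rather than the non-metric pseudometric the corollary asks for.

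For (a)$\Leftrightarrow$(ii) the key remark is that the non-adjacent pairs $u,v$ with $d_w(u,v)=0$ carry no freedom: every $\rho\in\mathfrak M_w$ is squeezed by $0\leqslant\rho(u,v)\leqslant d_w(u,v)=0$, so $\rho(u,v)=0=d_w(u,v)$ is forced, and such pairs never distinguish two continuations. It therefore suffices to control the non-adjacent pairs with $d_w(u,v)\neq 0$, and on these the situation is governed exactly as in Theorem~\ref{p1}. To prove (ii)$\Rightarrow$(a) I would assume some $\rho'\neq d_w$ and use Proposition~\ref{p7} to find a non-adjacent pair with $\rho'(u,v)=b<d_w(u,v)$; then automatically $d_w(u,v)>b\geqslant 0$, so~(\ref{e10}) applies and the cycle argument from the first part of the proof of Theorem~\ref{p1} produces a cycle violating Theorem~\ref{t1}(iii), contradicting that $\rho'$ is a pseudometric. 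Conversely, for (a)$\Rightarrow$(ii) I would suppose~(\ref{e10}) fails at some non-adjacent pair with $d_w(u,v)\neq 0$ and, as in the converse part of the proof of Theorem~\ref{p1}, build the weight $w'$ agreeing with $w$ on $E(G)$ and assigning to $\{u,v\}$ some $r\in(d_w(u,v)-\ve,d_w(u,v))$ (which may be taken positive as $d_w(u,v)\neq 0$), all of whose cycles satisfy Theorem~\ref{t1}(iii); then $d_{w'}$ is a second continuation of $w$ with $d_{w'}(u,v)=r<d_w(u,v)$, contradicting (a).

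The main point requiring care is that Theorem~\ref{p1} is stated for a \emph{metrizable} weight, while here $w$ is only pseudometrizable, so I must verify that its proof uses metrizability only inessentially. In the uniqueness direction this is immediate, since the offending pair always satisfies $d_w(u,v)>b\geqslant 0$, which is the only positivity actually invoked there. In the construction direction the sole appeal to ``metric'' is to exclude $q(P)>d_w(u,v)$; but this follows already from $d_w$ being a \emph{pseudometric}: viewing $d_w$ as a weight on the complete graph on $V(G)$, every cycle, in particular the one on $E(P)\cup\{u,v\}$, satisfies Theorem~\ref{t1}(iii), whence $2\max_{e\in P}w(e)\leqslant 2\max_{e\in E(P)\cup\{u,v\}}d_w(e)\leqslant w(P)+d_w(u,v)$, i.e. $q(P)\leqslant d_w(u,v)$ for all $P$. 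Reading the phrase ``$d_w$ is not a metric'' in the proof of Theorem~\ref{p1} as ``$d_w$ violates Theorem~\ref{t1}(iii), hence is not a pseudometric, contradicting Proposition~\ref{p7}'' makes both arguments valid verbatim for pseudometrizable $w$. Combining (b)$\Leftrightarrow$(i) with (a)$\Leftrightarrow$(ii) then gives the corollary.
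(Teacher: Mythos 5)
Your proposal is correct. Note first that the paper states this corollary \emph{without} proof, presenting it as an immediate consequence of Proposition~\ref{p7} and (the argument of) Theorem~\ref{p1}; your derivation is exactly that intended route, and you supply the two patches that the derivation genuinely requires because $w$ is here only pseudometrizable rather than metrizable. The first is your observation that pairs with $d_w(u,v)=0$ carry no freedom, since $0\leqslant\rho(u,v)\leqslant d_w(u,v)$ forces $\rho(u,v)=0$ for every $\rho\in\mathfrak M_w$; this is precisely what justifies restricting condition~(\ref{e10}) to pairs with $d_w(u,v)\neq 0$, and your remark that any offending pair with $\rho'(u,v)=b<d_w(u,v)$ automatically satisfies $d_w(u,v)>b\geqslant 0$ lets the uniqueness direction of Theorem~\ref{p1} run unchanged. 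The second is the step where the paper's proof of Theorem~\ref{p1} excludes $q(P)>d_w(u,v)$ by appealing to metrizability of $d_w$: your replacement argument --- that $q(P)\leqslant d_w(u,v)$ follows from $d_w$ being a \emph{pseudometric}, by applying Theorem~\ref{t1}(iii) to the cycle on $E(P)\cup\{\{u,v\}\}$ whose weight is a restriction of $d_w$ (using $w(e)=d_w(e)$ on $E(G)$ from Theorem~\ref{t1}(ii)) --- is sound and is the right fix. Your care in taking $r\in(d_w(u,v)-\varepsilon,\,d_w(u,v))$ positive, possible exactly because $d_w(u,v)\neq 0$, keeps $w'$ a legitimate $\mathbb{R}^+$-valued weight, and the clean decomposition into (a) $\mathfrak M_w$ a singleton and (b) $d_w$ not a metric, proved independently as (a)$\Leftrightarrow$(ii) and (b)$\Leftrightarrow$(i), together with $\rho=d_w$ from Proposition~\ref{p7}, yields the full statement including the ``Moreover'' clause.
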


\begin{example}
\begin{figure}[ht]
\begin{tikzpicture}
\draw (0,0) node[left]{$u$}  -- (6,2) node[right]{$t$} -- (6,0) node[right]{$v$} -- (3,-1) node[below]{$s$} -- (0,0);

\draw[dashed] (0,0) -- (6,0);
\draw[dashed] (3,-1) -- (6,2);

\draw (3,1) node [above] {$5$};
\draw (6,1) node [right] {$1$};
\draw (4.5,-1) node [above] {$2$};
\draw (1,-0.7) node [right] {$2$};

\draw[fill=black] (0,0) circle [radius=0.09];
\draw[fill=black] (6,2) circle [radius=0.09];
\draw[fill=black] (3,-1) circle [radius=0.09];
\draw[fill=black] (6,0) circle [radius=0.09];
\end{tikzpicture}
\caption{An example of a finite weighted graph $(G,w)$ with the unique continuation.}
\label{f1}
\end{figure}
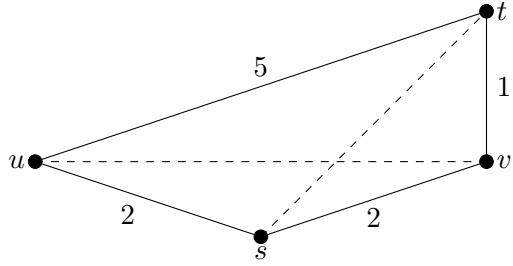

Let us consider a weighted graph $(G,w)$ such that $V(G)=\{u,v,s,t\}$, $w(\{u,t\})=5$, $w(\{t,v\})=1$, $w(\{v,s\})=w(\{s,u\})=2$. Consider infinite stationary sequences of paths $P_n=\{s,u,t\}$, $Q_n=\{u,t,v\}$ for every $n\in \NN$, see Figure~\ref{f1}. Since
$$
2\max\limits_{e\in P_n}w(e)-w(P_n)=2\cdot5-(5+2)=3= d_w(s,t)
$$
and
$$
2\max\limits_{e\in Q_n}w(e)-w(Q_n)=2\cdot5-(5+1)=4= d_w(u,v)
$$
for every $n$, by Proposition~\ref{p1} a continuation of the weight $w$ to a metric is unique.
\end{example}

\begin{example}
\begin{figure}[ht]
\begin{tikzpicture}
\draw (0,0) node[left]{$u$}  -- (3,5) node[above]{$x_2$} -- (6,0) node[right]{$v$} -- (3,-1) node[below]{$x_1$} -- (0,0);

\draw (0,0) -- (3,3) node[above]{$x_3$} -- (6,0);
\draw (0,0) -- (3,1) node[above]{$x_n$} -- (6,0);
\draw[loosely dotted] (3,3) -- (3,0);

\draw (1.3,2.5) node [above] {$5$};
\draw (4.9,2.5) node [above] {$1+\frac12$};

\draw (1.5,1.6) node [above] {$5$};
\draw (4.5,1.6) node [above] {${\scriptstyle 1+\frac13}$};

\draw (1.5,0.5) node [above] {$5$};
\draw (4.5,0.5) node [above] {$1+\frac1n$};

\draw (4.5,-1) node [above] {$2$};
\draw (1,-0.7) node [right] {$2$};

\draw[fill=black] (0,0) circle [radius=0.09];
\draw[fill=black] (3,5) circle [radius=0.09];
\draw[fill=black] (6,0) circle [radius=0.09];
\draw[fill=black] (3,3) circle [radius=0.09];
\draw[fill=black] (3,1) circle [radius=0.09];
\draw[fill=black] (3,-1) circle [radius=0.09];
\end{tikzpicture}
\caption{A weighted graph $(G,w)$ with $\card(V(G))=\aleph_0$.}
\label{f2}
\end{figure}
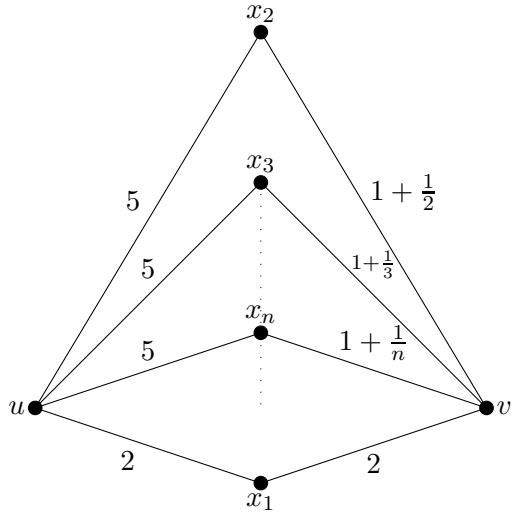

Let us consider an infinite case. Let $(G,w)$ be a weighted graph  with $V(G)=\{u,v,x_1,x_2,...\}$ and let $w(\{u,x_1\})=w(\{x_1,v\})=2$ and $w(\{u,x_n\})=5$, $w(\{v,x_n\})=1+1/n$, $n=2,3,...$, see Figure~\ref{f2}. By condition (iii) of Theorem~\ref{t1} the weight $w$ is pseudometrizable. Moreover, it is easy to see that it is also metrizable.

Consider a weighted graph $(\widetilde{G},\widetilde{w})$ such that $V(\widetilde{G})=V(G)$, and the set of edges $E(\widetilde{G})$ consists of all distinct unordered pairs of points from $V(G)$ except the pair $\{u,v\}$. For every edge $\{x,y\}\in E(\widetilde{G})$ define its weight $\widetilde{w}(\{x,y\})=d_w(u,v)$ where $d_w$ is a shortest-path pseudometric of the graph $(G,w)$, see~(\ref{e3}).

Consider a sequence of paths $P_n=\{u,x_n,v\}$.
Since the relation
$$
2\max\limits_{e\in P_n}\widetilde{w}(e)-\widetilde{w}(P_n)
=2\cdot5-(5+1+\frac{1}{n})
\rightarrow
4-0=
d_{\widetilde{w}}(u,v)-0, \ \text{ as } \  n \rightarrow \infty,
$$
holds, we have that the weight $\tilde{w}$ has a unique continuation to a metric $\rho$ which must be established only for the edge $\{u,v\}$, i.e., $\rho(u,v)=d_{\tilde{w}}(u,v)=4$.
\end{example}

\end{document}